\newtheorem{Lem}{Lemma}
\newtheorem{Theo}[Lem]{Theorem}
\newtheorem{Prop}[Lem]{Proposition}
\newcommand{\intl}{\int\limits}
\begin{document}
\enlargethispage{5\baselineskip}
\title[Four prime cubes]{The density of integers representable as the sum of four prime cubes}
\author[C. Elsholtz]{Christian Elsholtz}
\address{Christian Elsholtz, Institut f\"ur Analysis und Zahlentheorie,
Technische  Universit\"at Graz, 
A-8010 Graz, Austria}
\author[J.-C. Schlage-Puchta]{Jan-Christoph Schlage-Puchta}
\address{Jan-Christoph Schlage-Puchta,
Mathematical institute,
University Rostock,
18957 Rostock,
Germany}

\begin{abstract}
The  set of integers which can be written as the sum of four prime cubes has
lower density at least $0.009664$. This improves earlier bounds of 
 $0.003125$ by Ren and $0.005776$ by Liu.
\end{abstract}
\maketitle
\thispagestyle{empty}
\section{Introduction and results}
The main result of this article is the following.
\begin{Theo}
\label{thm:4cubes}
The  set of integers which can be written as the sum of four prime cubes has
lower density $\omega \geq 0.009664$.
\end{Theo}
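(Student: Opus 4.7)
The natural approach is a second-moment (Cauchy--Schwarz) argument combined with the Hardy--Littlewood circle method. Let $r(n)$ denote the number of ordered representations $n = p_1^3+p_2^3+p_3^3+p_4^3$ with $p_i$ prime, and set $\mathcal{A}(N)=\#\{n\le N: r(n)\ge 1\}$. Cauchy--Schwarz gives
\[
 \Bigl(\sum_{n\le N} r(n)\Bigr)^{2} \;\le\; \mathcal{A}(N) \cdot \sum_{n\le N} r(n)^2,
\]
so $\omega \ge \liminf_N \bigl(\sum r(n)\bigr)^2/\bigl(N\sum r(n)^2\bigr)$. Since a sum of four odd cubes is automatically $\equiv 4\pmod 8$, with further local obstructions mod $3, 7, 9$, I would sharpen the ratio by restricting the primes to a suitable residue class modulo a small $q$: the allowable $n$ are then confined to one class mod $q$, and the second moment drops by a factor commensurate with $q$, while the first moment only drops by $\phi(q)^{-4}$.

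\textbf{Evaluating the two moments.} Denote by $r_*(n)$ the restricted counting function. Siegel--Walfisz in each variable yields
\[
 \sum_{n\le N} r_*(n) \;=\; C_1\,\frac{N^{4/3}}{(\log N)^4}\,\bigl(1+o(1)\bigr),
\]
with $C_1$ an explicit constant (a $4$-fold simplex integral times a density factor for the chosen residues). The second moment $\sum r_*(n)^2$ counts prime $8$-tuples with $p_1^3+\cdots+p_4^3 = q_1^3+\cdots+q_4^3 \le N$; I would evaluate it by the circle method applied to $f(\alpha)=\sum(\log p)\,e(\alpha p^3)$ (summed over $p\equiv a\pmod q$, $p^3\le N$). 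On the major arcs one extracts an asymptotic main term of size $\mathfrak{S}\,J\,N^{5/3}/(\log N)^8$, where $\mathfrak{S}=\prod_p \sigma_p$ is the singular series and $J$ the singular integral; on the minor arcs one invokes Hua's inequality for eighth moments of cubic exponential sums over primes, which is well known to suffice.

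\textbf{Main obstacle.} The resulting lower bound has the shape $\omega \ge C_1^2/(\mathfrak{S}\,J)$ up to absolute factors, so the crux is a sharp numerical evaluation of this ratio. For $p>3$, Weil-type estimates on cubic Gauss sums give $\sigma_p = 1+O(p^{-2})$ and the tail of the Euler product is easy to control; the real work lies in the exact computation of $\sigma_p$ at the small primes $p=2,3,7$ together with the optimisation over the choice of residue class in the restriction on the primes $p_i$. Since even a factor-of-$1.5$ slackness in a single local factor would erase the improvement over Liu's $0.005776$, the required precision in this step is the principal difficulty; certifying numerically that the optimised ratio exceeds $0.009664$ then completes the proof.
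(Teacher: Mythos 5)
Your high-level framing — a Cauchy--Schwarz second-moment argument with the circle method behind the scenes — is correct, and it is the starting point of the paper. But the proposal diverges from the paper in several ways that are not merely cosmetic, and one step as stated would fail.

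\textbf{Missing the Vaughan range restriction.} The paper does not count \emph{all} representations $n = p_1^3 + \cdots + p_4^3$; it fixes two size scales $U = (N/(16+\delta))^{1/3}$ and $V = U^{5/6}$ and restricts $p_1, p_2$ to $[U, 2U]$ and $p_3, p_4$ to $[V, 2V]$. This asymmetric splitting, which goes back to Vaughan, is what makes $\sum r^2(n)$ of order $UV^4 L^{-8}$ rather than $N^{5/3}L^{-8}$, and the constant in Theorem~\ref{thm:4cubes} is lost without it. Your proposed substitute — restricting the $p_i$ to residue classes modulo a small $q$ — is a different device and is not used here; it does not reproduce the gain from the unequal ranges, nor is it where the paper's numerical improvement over Liu comes from.

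\textbf{Evaluating $\sum r^2(n)$ directly by the circle method.} You propose to prove an asymptotic for the eight-variable prime equation and to control the minor arcs by ``Hua's inequality for eighth moments \dots which is well known to suffice.'' This is the serious gap. Hua's inequality for cubic exponential sums over primes gives $\int_0^1 |f(\alpha)|^8\,d\alpha \ll N^{5/3+\varepsilon}$, which is \emph{larger} than the sought main term $N^{5/3}(\log N)^{-8}$; it does not give a log-power saving and does not suffice as stated. The paper deliberately sidesteps this: it observes that an asymptotic formula for $R(m)$ (the seven-variable problem with one integer variable $m$) cannot currently be proved, and instead applies an \emph{upper bound sieve} (the weighted Iwaniec sieve of Kawada--Wooley) to the set $\mathcal{M}$ of integers $m$ with $m^3 = p_5^3+p_6^3+p_7^3+p_8^3 - p_2^3-p_3^3-p_4^3$, using only the averaged information $\sum_{d\le N^{11/90-\epsilon}} \eta_d E_d \ll UV^4L^{-A}$ from Lemma~\ref{Lem:Ren circle}. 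The bilinear average over $d$ is exactly what rescues the minor-arc estimate; a pointwise asymptotic is neither available nor needed. Replacing the sieve by a direct circle-method asymptotic is not a harmless simplification — it is the step the paper explicitly says one cannot carry out.

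\textbf{Where the numerical improvement actually lives.} You identify the local factors $\sigma_p$ at small primes as ``the principal difficulty.'' In the paper the improvement over Ren's $0.003125$ and Liu's $0.005776$ comes from a careful provable upper bound for the eight-dimensional archimedean integral $J$ (reducing it to a three-dimensional integral that is computed numerically to lie in $[7.85, 7.87]$, giving $J \le 440.62\,UV^4L^{-7}$) together with tight explicit bounds for the sieve product $W(z)$ and the singular series $\mathfrak{S}_1$ via Weil-type bounds on $S(p,a)$. Optimising a residue class for the $p_i$ plays no role.

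In short: the Cauchy--Schwarz skeleton and the recognition that the work lies in the second moment are right, but the proposal lacks the two structural ingredients (Vaughan's $U$/$V$ split and the upper bound sieve applied to the eighth variable) on which both the method and the constant depend, and the stated minor-arc treatment via Hua's inequality is insufficient.
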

The same result had been obtained by Ren \cite{Ren4cubes} 
with a density of 0.003125, which had been improved by 
Liu \cite{Liu4cubes} to 0.005776. Note that apart from a set of density 0, an integer, which is representable as the sum of four prime cubes satisfies the congruence conditions $2|n$, $n\not\equiv\pm 1, \pm 3\pmod{9}$, $n\not\equiv\pm 1\pmod{7}$, thus the density of all integers of this form cannot be larger than $\frac{25}{126}=0.1984\ldots$.

The proof follows essentially the work of Ren \cite{Ren4cubes}, 
with considerably more precise estimates of an eight-dimensional
integral and the singular series. Define
$U=(N/(16+\delta))^{1/3}$, $V=U^{5/6}$, $L=\log N$, and denote by $r(n)$ the number of representations
of $n$ as $n=p_1^3+p_2^3+p_3^3+p_4^3$, where $U\leq p_1, p_2\leq 2U$, $V\leq
p_3, p_4\leq 2V$. (Note that the choice of the exponent $5/6$ goes back to Vaughan \cite{vaughan:1985}.)
Observe that 
\[ \frac{1+\varepsilon_1(\delta)}{8}N \leq n 
\leq  (1+\varepsilon_2(\delta))N, \]
where $\lim_{\delta \rightarrow 0} \varepsilon_i(\delta)=0$.
Let $I(\delta,N)=\left[ \frac{1+\varepsilon_1(\delta)}{8}N , 
(1+\varepsilon_2(\delta))N\right]$.
For very small $\delta$ one can approximately
think of $n \in [\frac{N}{8}, N]$.
Then the following holds:
\begin{Prop}
\label{prop:r^2 sum}
We have
\[
\sum_{n\in I(\delta,N)} r^2(n) \leq (C+o(1))UV^4L^{-8},
\]
where $C=100\, 552$,
for sufficiently small $\delta$.
\end{Prop}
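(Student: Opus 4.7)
My plan is to reduce the proposition to a sharp evaluation of the main term in a Hardy--Littlewood analysis. Set
\[
f(\alpha)=\sum_{U\leq p\leq 2U}e(\alpha p^3),\qquad g(\alpha)=\sum_{V\leq p\leq 2V}e(\alpha p^3),
\]
with $e(x)=e^{2\pi ix}$. Since $r(n)$ is the $n$th Fourier coefficient of $f^2 g^2$ and vanishes outside $I(\delta,N)$, Parseval gives
\[
\sum_{n\in I(\delta,N)}r(n)^2=\int_0^1 |f(\alpha)|^4 |g(\alpha)|^4\,d\alpha,
\]
so the proposition reduces to an upper bound with an explicit constant on this eighth moment.

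I would carry out a standard Farey dissection of $[0,1]$ into major arcs $\mathfrak{M}$ around $a/q$ with $q\leq L^B$ and minor arcs $\mathfrak{m}$. Following Ren, the minor arc contribution is absorbed into the $o(\cdot)$ term: one combines the Weyl-type cancellation available for prime cubic exponential sums (Vinogradov--Vaughan, with the choice $V=U^{5/6}$ made precisely so as to permit this bound), Hooley's estimate on non-trivial solutions of $a^3+b^3=c^3+d^3$, and the diagonal-dominated fourth moment $\int_0^1|f|^4\,d\alpha\ll U^2 L^{-2}$ to conclude $\int_{\mathfrak m}|f|^4|g|^4\,d\alpha=o(UV^4 L^{-8})$.

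The bulk of the proof is the major arc evaluation. On $\mathfrak{M}$ one uses the standard approximation
\[
f(a/q+\beta)\approx\frac{S(q,a)}{\phi(q)}\,w_U(\beta),\qquad g(a/q+\beta)\approx\frac{S(q,a)}{\phi(q)}\,w_V(\beta),
\]
with cubic Ramanujan-type sums $S(q,a)=\sum_{(h,q)=1}e(ah^3/q)$ and $w_U(\beta)=\int_U^{2U}e(\beta t^3)/\log t\,dt$, and analogously $w_V$. Substituting into $|f|^4|g|^4$ and collecting terms, the main contribution factorises as $\mathfrak{S}\cdot J$, where
\[
\mathfrak{S}=\sum_{q=1}^{\infty}\frac{1}{\phi(q)^8}\sum_{\substack{1\leq a\leq q\\(a,q)=1}}|S(q,a)|^8,\qquad J=\int_{-\infty}^{\infty}|w_U(\beta)|^4|w_V(\beta)|^4\,d\beta.
\]

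The crux of the argument, and the place where the constant $C=100\,552$ is actually decided, is evaluating $\mathfrak{S}$ and $J$ to sufficient precision. For $\mathfrak{S}$, multiplicativity yields an Euler product $\mathfrak{S}=\prod_p A_p$; the local factor $A_p$ is computed from the classical evaluation of $|S(p,a)|$ (distinguishing $p\equiv 1\pmod 3$ from $p\equiv 2\pmod 3$, with $p=3$ treated by hand), the product is truncated at a suitable cutoff $p\leq P$, and the tail is bounded numerically. For $J$, the substitution $x=t^3$ rewrites $w_U$ as the Fourier transform of $\tfrac{1}{3}x^{-2/3}(\log x)^{-1}\mathbf{1}_{[U^3,8U^3]}(x)$; by Plancherel, $J$ becomes a seven-dimensional integral of eight such densities constrained to the hyperplane $\xi_1+\xi_2+\eta_1+\eta_2=\xi_3+\xi_4+\eta_3+\eta_4$ (with four $\xi$-variables of scale $U^3$ and four $\eta$-variables of scale $V^3$). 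Splitting $\beta$ into the ranges $|\beta|\leq U^{-3}$, $U^{-3}\leq|\beta|\leq V^{-3}$ and $|\beta|\geq V^{-3}$ exhibits the expected order $UV^4 L^{-8}$; using $V=U^{5/6}$ and estimating each piece to high accuracy yields the explicit constant. The main obstacle is not any individual ingredient but the bookkeeping that combines sharp values of $\mathfrak{S}$ and $J$, together with the implicit constants in the approximation of $f$ and $g$, into the single clean bound $100\,552$.
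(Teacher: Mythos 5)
Your Parseval identity is correct, and your description of the major-arc heuristic identifies the right archimedean factor. But the overall strategy does not match the paper's, and more importantly it does not work: you are proposing to prove the proposition by a direct circle-method evaluation of $\int_0^1|f|^4|g|^4\,d\alpha$, which would require controlling the minor arcs well enough to isolate the major-arc main term. If one could do that, one would in effect have an asymptotic formula for the number of representations as a sum of eight prime cubes (in the given dyadic ranges). No such asymptotic is known; indeed the paper says explicitly that even the analogous asymptotic with seven prime cubes is out of reach. Concretely, the step where you claim $\int_{\mathfrak m}|f|^4|g|^4\,d\alpha=o(UV^4L^{-8})$ fails: the natural route $\sup_{\mathfrak m}|f|^2\cdot\int_0^1|f|^2|g|^4\,d\alpha$ forces you to bound a mixed sixth moment, and after Cauchy--Schwarz against $\int|f|^4$ and $\int|g|^8$ the required power saving in the minor-arc supremum for $\sum_{U\le p\le 2U}e(\alpha p^3)$ is far larger than anything currently available. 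Hooley's bound and the choice $V=U^{5/6}$ do not rescue this; $V=U^{5/6}$ comes from Vaughan's work on the Waring--Goldbach lower bound for eight cubes, not from any eighth-moment asymptotic.

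The paper's actual argument sidesteps this obstruction with a sieve, and that is the idea your proposal is missing. One rewrites $\sum r^2(n)$ as the number of $7$-tuples of primes $(p_2,\dots,p_8)$ such that $p_5^3+p_6^3+p_7^3+p_8^3-p_2^3-p_3^3-p_4^3$ is the cube of a prime $m$, and sets $R(m)$ to be the number of such $7$-tuples for a given \emph{integer} $m$. Because $m$ ranges over all integers, the average $\sum_{U\le m\le 2U,\,d\mid m}R(m)$ \emph{is} accessible by the circle method, and the needed asymptotic $\tfrac{\mathfrak S_d}{d}J+E_d$ with level of distribution $N^{11/90-\epsilon}$ is quoted from Ren and Liu (the paper's Lemma~\ref{Lem:Ren circle}). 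The weighted Iwaniec linear sieve of Kawada--Wooley then gives
\[
\sum_{n\in I(\delta,N)} r^2(n)\le (1+o(1))\,e^\gamma J\,\mathfrak S_1\,W(z),
\]
and the rest of the paper is devoted to numerically bounding $J$, $\mathfrak S_1$, and the sieve product $W(z)$. This also explains why the constant is $100\,552$: it is not the ``true'' singular series times singular integral, but carries the multiplicative loss inherent in an upper-bound sieve (the factor $e^\gamma$ and the Mertens product), so your framework would not reproduce it. Likewise, the correct singular series involves the unrestricted cubic Gauss sum $S(q,ad^3)$ in the one slot corresponding to the integer $m$ and the restricted sum $C(q,a)$ in the seven prime slots, unlike the pure $|S(q,a)|^8$ series you wrote down. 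The key ingredient to add to your outline is the reduction to an upper-bound sieve with the Ren--Liu level-of-distribution lemma as input.
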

Theorem~\ref{thm:4cubes} follows from Proposition~\ref{prop:r^2 sum} by a
simple application of Cauchy's inequality. Proposition~\ref{prop:r^2 sum} can
also be used to give estimates for integrals occurring in applications of the
circle method. We do not go into details here, but refer the reader to the paper \cite{Ren8cubes} by Liu and L\"u.

We have
\begin{eqnarray*}
\sum_{n \in I(\delta,N)} 
r^2(n) & = & \#\{p_1^3+p_2^3+p_3^3+p_4^2=p_5^3+p_6^3+p_7^3+p_8^3:\\
 &&\qquad\qquad U\leq p_1, p_2, p_5, p_6\leq 2U, V\leq p_3, p_4, p_7, p_8\leq 2V\}.\\
 & = & \#\{p_2, p_5, p_6\in[U, 2U], p_3, p_4, p_7, p_8 \in[V, 2V]:\\
 &&\qquad\qquad p_5^3+p_6^3+p_7^3+p_8^3 - p_2^3-p_5^3-p_6^3\mbox{ is the cube of a prime}\}
\end{eqnarray*}

To bound the quantity on the right we use an upper bound sieve, that is, we first consider the set $\mathcal{M}$ of all integers $m$, such that $p_5^3+p_6^3+p_7^3+p_8^3 - p_2^3-p_5^3-p_6^3 = m^3$, and then we sift out the possible prime values. To do so we need information on the distribution of the elements of $\mathcal{M}$ in residue classes.

Define $R(m)$ as the
number of solutions of the equation
$m^3+p_2^3+p_3^3+p_4^3=p_5^3+p_6^3+p_7^3+p_8^3$ with $U\leq p_2, p_5, p_6\leq
2U$, $V\leq p_3, p_4, p_7, p_8\leq 2V$.

We would expect that $R(m)$ is asymptotically equal to a local factor given by some singular series multiplied by a global factor given by an integral involving the density $\frac{1}{\log t}$ of the prime numbers. Unfortunately, we cannot prove this. In fact, we cannot even prove an asymptotic formula for the number of representations of an integer as the sum of 7 cubes. However, if we average over residue classes, then we can prove such a result. This is sufficient, as a linear sieve only requires information on the mean values $\underset{d|m}{\sum\limits_{U\leq m\leq 2U} } R(m)$. The archimedian part of the asymptotic formula for this sum is given by the integral
\[
J=\frac{1}{3}\int_\mathfrak{D}\frac{dv_1dv_2dv_3dv_4du_1du_2du_3du_4}
{v_1^{2/3}\cdots v_4^{2/3}u_1^{2/3}\cdots u_4^{2/3}
\log v_1\cdots\log v_4\log u_1\cdots \log u_4}
\]
where 
\begin{multline*}
\mathfrak{D}=\{(v_1, \ldots, v_4, u_1, \ldots, u_4):
V^3\leq v_1, \ldots, v_4\leq 8V^3, U^3\leq u_1, \ldots, u_4\leq 8U^3,\\
u_1+u_2+v_1+v_2=u_3+u_4+v_3+v_4\},
\end{multline*}
while the singular series is
\[
\mathfrak{S}_d = \sum_{q=1}^\infty T_d(q),
\]
where
\[
T_d(q)=\sum_{(a, q)=1} \frac{S(q, ad^3)C(q, a)^3
\overline{C(q, a)^4}}{q\varphi(q)^7},
\]
and
\[
S(q, a) = \sum_{m=1}^q e(\frac{am^3}{q}), \qquad C(q, a) = \sum_{(m,q)=1}
e(\frac{am^3}{q}).
\]
To summarize these considerations:
 the estimation of $\sum r^2(n)$ falls into four tasks: 
\begin{enumerate}
\item
estimate the distribution of $R$ on residue classes
\item
 estimate $J$
\item estimate $\mathfrak{S}_d$
\item
apply an upper bound sieve.
For this last step we use the weighted version of Iwaniecs's sieve due to Kawada and Wooley.
\end{enumerate}

\section{Step 1}
For the first part we use results by Ren \cite{Ren4cubes} and Liu \cite{Liu4cubes} as follows.

Define $E_d$ by means of the equation
\[
\underset{d|m}{\sum_{U\leq m\leq 2U} } R(m) = \frac{\mathfrak{S}_d}{d}J + E_d.
\] 
Then we have the following:

\begin{Lem}
\label{Lem:Ren circle}
\begin{enumerate}
\item $\mathfrak{S}_d$ is absolutely convergent and bounded independently of
  $d$. 
\item Let $\epsilon>0$ be fixed and let $\tau$ denote the divisor function. 
For all complex numbers $\eta_d$
  satisfying $|\eta_d|\leq\tau(d)$ and for all fixed $A$ the following estimate
holds:
\[
\sum_{d\leq N^{11/90-\epsilon}} \eta_d E_d \ll UV^4 L^{-A}.
\]
\end{enumerate}
\end{Lem}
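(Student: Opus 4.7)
The plan is to follow the Hardy--Littlewood circle method for the underlying eight-cube problem and to cite the uniform estimates already worked out by Ren and Liu for the sieve level of distribution $N^{11/90-\epsilon}$.

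For part (1), we apply the standard Weyl-type bounds $|S(q,a)|,|C(q,a)|\ll q^{2/3+\epsilon}$ valid for $(a,q)=1$ (see Vaughan's monograph). Inserting these into
\[
T_d(q)=\sum_{(a,q)=1}\frac{S(q,ad^3)\,C(q,a)^3\,\overline{C(q,a)^4}}{q\varphi(q)^7}
\]
and using $\varphi(q)\gg q^{1-\epsilon}$ gives $|T_d(q)|\ll q^{-5/3+O(\epsilon)}$ uniformly in $d$; note that $d$ enters only through the argument $ad^3$ of $S(q,\cdot)$, which still runs over a reduced residue system as $a$ does. The series $\sum_q q^{-5/3+\epsilon}$ converges, yielding both absolute convergence and the $d$-uniform boundedness of $\mathfrak{S}_d$.

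For part (2), introduce
\[
S_d(\alpha)=\sum_{\substack{U\leq m\leq 2U\\ d\mid m}}e(m^3\alpha),\quad
f_U(\alpha)=\sum_{U\leq p\leq 2U}e(p^3\alpha),\quad
f_V(\alpha)=\sum_{V\leq p\leq 2V}e(p^3\alpha),
\]
so that orthogonality yields
\[
\underset{d\mid m}{\sum_{U\leq m\leq 2U}}R(m)=\int_0^1 S_d(\alpha)f_U(\alpha)f_V(\alpha)^2\overline{f_U(\alpha)^2 f_V(\alpha)^2}\,d\alpha.
\]
Split $[0,1]$ into major arcs $\mathfrak{M}$ (intervals of length $\asymp Q/N$ around Farey fractions $a/q$ with $q\leq Q$) and minor arcs $\mathfrak{m}$. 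On $\mathfrak{M}$ the usual asymptotic expansions replace $f_U,f_V$ by their main terms involving $C(q,a)/\varphi(q)$ and, after the substitution $m=d\tilde m$, replace $S_d$ (a cube sum over $[U/d,2U/d]$ at argument $d^3\alpha$) by a main term involving $S(q,ad^3)/d$. The arithmetic factors reassemble into $T_d(q)$ (summed to give $\mathfrak{S}_d$) and the smooth factors into the integral $J$, producing $\mathfrak{S}_d J/d$ up to an acceptable error.

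The real work lies on the minor arcs, summed against coefficients with $|\eta_d|\leq\tau(d)$, uniformly for $d\leq N^{11/90-\epsilon}$. The recipe combines a pointwise Vaughan/Weyl bound for one prime cube sum, Hua's fourth-moment inequality for the remaining factors, and the trivial bound $|S_d(\alpha)|\leq U/d$; the divisor weights contribute $\sum_{d\leq D}\tau(d)/d\ll L^2$, which is absorbed by the $L^{-B}$ saving from the Vaughan bound. The exponent $11/90$ is the precisely largest value of $D=N^\eta$ for which the major-arc error, expanded uniformly in $d$, remains within $UV^4L^{-A}$; this uniformity is the genuine technical obstacle, and it is the content of the Ren--Liu circle-method analysis cited in the statement.
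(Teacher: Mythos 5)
The paper gives no proof of this lemma: it attributes part (1) to Ren \cite[Lemma~3.2]{Ren4cubes} and part (2) to Liu \cite[Lemma~3.2]{Liu4cubes} with no further argument, so there is nothing internal to compare against. Your attempt to sketch the underlying circle-method proof is therefore doing more than the paper, but it contains a genuine error in part (1). You claim that ``$d$ enters only through the argument $ad^3$ of $S(q,\cdot)$, which still runs over a reduced residue system as $a$ does,'' and on that basis invoke the Weyl bound $|S(q,ad^3)|\ll q^{2/3+\epsilon}$. That reasoning requires $(d,q)=1$; when $(d,q)>1$ no value $ad^3$ is coprime to $q$ (indeed $S(q,ad^3)=q$ whenever $q\mid d^3$), and the bound for reduced residues simply does not apply. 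The repair is to set $g=(d^3,q)$ and use $S(q,ad^3)=g\,S(q/g,ad^3/g)$, noting that $(ad^3/g,\,q/g)=1$, which gives $|S(q,ad^3)|\ll g^{1/3}q^{2/3+\epsilon}$ and hence $|T_d(q)|\ll q^{-4/3+O(\epsilon)}$ uniformly in $d$; this is weaker than the $q^{-5/3}$ you claimed but still summable, so the conclusion survives even though the step as written is wrong.

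Your part (2) sets up the correct integral $\int_0^1 S_d(\alpha)f_U(\alpha)f_V(\alpha)^2\overline{f_U(\alpha)^2 f_V(\alpha)^2}\,d\alpha$ and correctly identifies the shape of the major-arc main term as $\mathfrak{S}_d J/d$. But the minor-arc recipe you describe (a pointwise Vaughan--Weyl bound on one prime-cube sum, Hua's inequality on the rest, the trivial $|S_d|\leq U/d$) is too coarse to visibly close at the level $d\leq N^{11/90-\epsilon}$ with the mixed lengths $U$ and $V=U^{5/6}$, and in particular gives no account of where the exponent $11/90$ comes from; obtaining the estimate uniformly in $d$ over this range is exactly what the Ren--Liu pruning and mean-value analysis is engineered to do. You implicitly concede this by deferring to their papers, which is also all the present paper does, so this is not a wrong step so much as a placeholder — but the minor-arc paragraph should be read as a pointer to the literature, not as a self-contained argument.
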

The first part of the statement is contained in Ren's result \cite[Lemma~3.2]{Ren4cubes}, the second statement is due to Liu \cite[Lemma~3.2]{Liu4cubes}.

\section{Step 2}
We now compute $J$.
\begin{Lem}
We have $J\leq 440.62 \, UV^4L^{-7}$.
\end{Lem}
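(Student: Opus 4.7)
The cleanest approach is to first substitute $v_i = x_i^3$ and $u_j = y_j^3$ (so $x_i\in[V,2V]$ and $y_j\in[U,2U]$). This is attractive because the one-variable identity
\[
\frac{dv}{v^{2/3}\log v} = \frac{3x^{2}\,dx}{x^{2}\cdot 3\log x} = \frac{dx}{\log x}
\]
collapses the algebraic and logarithmic factors in one stroke. Interpreting the linear constraint through a Dirac $\delta$, $J$ becomes
\[
J = \frac{1}{3}\int_{[V,2V]^4\times[U,2U]^4}\frac{\delta(x_1^3+x_2^3+y_1^3+y_2^3-x_3^3-x_4^3-y_3^3-y_4^3)}{\prod_{i}\log x_i \prod_{j}\log y_j}\prod dx_i\,dy_j.
\]

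I would then pull out the logs via the crude bounds $\log x_i \geq \log V$ and $\log y_j \geq \log U$: since $\log U = L/3 + O(1)$ and $\log V = (5/6)\log U = 5L/18 + O(1)$, this furnishes a factor of $(54/5)^4 L^{-8}(1+o(1))$, already more than the $L^{-7}$ demanded by the lemma. The remaining task is to estimate the constrained volume
\[
I = \int_{[V,2V]^4\times[U,2U]^4}\delta(x_1^3+x_2^3+y_1^3+y_2^3-x_3^3-x_4^3-y_3^3-y_4^3)\prod dx\,dy.
\]
Opening $\delta$ by Fourier inversion and using the scaling $\int_A^{2A}e(\xi s^3)\,ds = A\,g(\xi A^3)$ with $g(\tau)=\int_1^2 e(\tau t^3)\,dt$, a substitution $\eta=\xi U^3$ gives
\[
I = UV^4\int_{\mathbb{R}}|g(\rho\eta)|^4|g(\eta)|^4\,d\eta, \qquad \rho=(V/U)^3 = U^{-1/2}.
\]
Since $|g|\leq 1$ pointwise and $\rho\to 0$, this is bounded by $UV^4\,C_0(1+o(1))$, where $C_0 := \int_{\mathbb{R}}|g(\eta)|^4\,d\eta$; the loss is controlled by the summability of $|g|^4$ (which decays like $|\eta|^{-4/3}$ by van der Corput).

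Finally, by Plancherel,
\[
C_0 = \int_{[1,2]^4}\delta(t_1^3+t_2^3-t_3^3-t_4^3)\,dt = \int_\Omega\frac{dt_2\,dt_3\,dt_4}{3(t_3^3+t_4^3-t_2^3)^{2/3}},
\]
where $\Omega = \{(t_2,t_3,t_4)\in[1,2]^3 : t_3^3+t_4^3-t_2^3\in[1,8]\}$ is an explicit bounded region. Combining everything gives
\[
J \leq \frac{C_0}{3}\left(\frac{54}{5}\right)^4 UV^4 L^{-8}(1+o(1)),
\]
so the lemma follows as soon as $C_0\lesssim 0.097$. The main obstacle is the careful numerical evaluation of $C_0$ with sufficient precision to certify the explicit constant $440.62$; everything else in the chain is a clean reduction.
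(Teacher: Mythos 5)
Your approach is genuinely different from the paper's. The paper freezes the logarithmic factors to constants, parametrizes $\mathfrak{D}$ by seven variables with $u_4=x-u_3$, decouples the $v$-integrals via the approximation $x\mapsto x'$, and reduces to a numerical triple integral $\approx 7.87$. You instead substitute $v_i=x_i^3$, $u_j=y_j^3$ to collapse the algebraic weights against the logs, open the constraint by Fourier inversion, use the scale separation $\rho=(V/U)^3\to 0$ to reduce to the single-scale quantity $C_0=\int_{\mathbb{R}}|g(\eta)|^4\,d\eta$, and finally recognize $C_0$ as a volume by Plancherel. This is a clean reorganization, and in fact your constant matches the paper's exactly: substituting $t_i=s_i^3$ shows the paper's triple integral equals $81\,C_0$, so the threshold $C_0\lesssim 0.097$ corresponds to their numerical value in $[7.85,7.87]$, and $\tfrac{1}{3}(54/5)^4 = 81\cdot\tfrac{18^4}{3\cdot 5^4}$.

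There is, however, a real gap in the power of $L$. You obtain $J\le (\mathrm{const})\,UV^4L^{-8}$, whereas the lemma asserts $L^{-7}$, and $L^{-7}$ is forced: $J$ is the archimedean part of the mean value $\sum_{U\le m\le 2U,\ d\mid m}R(m)$, where $m$ is a \emph{free} integer (it is sifted afterwards) and only the seven variables $p_2,\dots,p_8$ are primes carrying the density $1/\log p$. So the correct $J$ has exactly seven logarithmic factors (four $\log v_i$ and three $\log u_j$), and $J\asymp UV^4L^{-7}$; this is also what the rest of the paper's argument (and the heuristic $\sum r^2\asymp UV^4L^{-8}$) requires. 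The displayed definition of $J$ in the paper contains a spurious extra $\log u$; the paper's own proof shows the inconsistency, passing silently from $L^8$ in the first display to $L^7$ in the next, and calibrating its error terms against $UV^4L^{-7}$. You followed the displayed formula literally, so you are bounding a smaller quantity than the intended $J$, and the resulting $L^{-8}$ bound is too strong to be a valid estimate for the actual $J$. The fix is local: drop one $\log y_j$ from your integrand, in which case the $3^8$ from $dv_i\,du_j\mapsto 3\,dx_i,\ 3\,dy_j$ is divided by only $3^7$ from the logs, absorbing the global $1/3$, and you land on $J\le \frac{18^4\cdot 27}{5^4}\,C_0\,UV^4L^{-7}(1+o(1))$ with the same numerical constant. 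With this correction your Fourier-analytic argument does prove the lemma.
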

\begin{proof}
As each of the variables $u_1,u_2,u_3,u_4,v_1,v_2,v_3,v_4$ varies
over its range of integration in ${\mathfrak D}$, it
changes by a factor 8 at most. Hence its logarithm changes by
$\mathcal{O}(1)$, and therefore 
\[
\log v_i=(1+\mathcal{O}(\frac{1}{\log
  V}))\log V^3 = (1+o(1))\frac{5}{6}L
\]
   and
\[ 
\log u_i=(1+\mathcal{O}(\frac{1}{\log U}))\log U^3 = (1+o(1))L.
\]
We conclude that
\[
J=\frac{(1+o(1))6^4}{3\cdot 5^4L^8}\intl_{V^3}^{8V^3}\frac{dv_1}{v_1^{2/3}}\cdots\intl_{V^3}^{8V^3}\frac{dv_4}{v_4^{2/3}}\intl_{U^3}^{8U^3}\frac{du_1}{u_1^{2/3}}\intl_{U^3}^{8U^3}\frac{du_2}{u_2^{2/3}}\intl_{\max(U^3, x-8U^3)}^{\min(8U^3, x-U^3)}\frac{du_3}{u_3^{2/3}(x-u_3)^{2/3}},
\]
where $x-u_3=v_1+v_2-v_3-v_4+u_1+u_2-u_3=u_4$.
If we replace $x$ by $x'=x- \left(v_1+v_2-v_3-v_4\right)
=u_1+u_2-u_3$, the range of the innermost
integral changes by $\mathcal{O}(V^3)$. Since the integrand is
$\mathcal{O}(V^{-8} U^{-8})$, and the range of the first six integrals
is $\mathcal{O}(V^{12} U^6)$, this change in the range introduces an
error $\mathcal{O}(V^7 U^{-2})=\mathcal{O}(U^{23/6})$, 
which is of smaller order of magnitude than
$UV^4 L^{-7}\gg (U^{13/3-\varepsilon})$. The change of the innermost integrand is
\[
\ll \frac{|x-x'|}{u_3^{2/3}(x-u_3)^{5/3}} \ll \frac{|x-x'|}{U^7} \ll V^3U^{-7}.
\]
The total range of integration has measure $\ll V^{12} U^{9}$, and the
factors outside the innermost integral are
$\mathcal{O}(V^{-8}U^{-4})$, hence this change introduces an error of
magnitude $\mathcal{O}(V^7 U^{-2})=\mathcal{O}(U^{23/6})$, which is also negligible. Hence 
\begin{eqnarray*}
J  & = & \frac{(1+o(1))6^4}{3\cdot 5^4L^7}
\intl_{V^3}^{8V^3}\frac{dv_1}{v_1^{2/3}}\cdots \intl_{V^3}^{8V^3} \frac{dv_1}{v_4^{2/3}}
\intl_{U^3}^{8U^3}\frac{du_1}{u_1^{2/3}} \intl_{U^3}^{8U^3}
\frac{du_2}{u_2^{2/3}}\\
&&\qquad\qquad
\intl_{\max(U^3, u_1+u_2-8U^3)}^{\min(8 U^3, u_1+u_2-U^3)}
\frac{du_3}{u_3^{2/3} (u_1+u_2-u_3)^{2/3}}\\
 & = & \frac{18^4+o(1)}{3\cdot 5^4 L^7} V^4 \intl_{U^3}^{8U^3}
\frac{du_1}{u_1^{2/3}} \intl_{U^3}^{8U^3} \frac{du_2}{u_2^{2/3}}
\intl_{\max(U^3, u_1+u_2-8U^3)}^{\min(8\cdot U^3, u_1+u_2-U^3)}
\frac{du_3}{u_3^{2/3} (n+u_1+u_2-u_3)^{2/3}}\\
 & = & \frac{18^4+o(1)}{3\cdot 5^4 L^7} UV^4 \intl_{1}^{8}
\frac{dt_1}{t_1^{2/3}} \intl_{1}^{8} \frac{dt_2}{t_2^{2/3}}
\intl_{\max(1, t_1+t_2-8)}^{\min(8, t_1+t_2-1)}
\frac{dt_3}{t_3^{2/3}(t_1+t_2-t_3)^{2/3}}.
\end{eqnarray*}
The triple integral can be computed numerically to be in the range $[7.85, 7.87]$. We conclude that
\[
J\leq (1+o(1)) \frac{18^4}{3\cdot 5^4} 7.87 \frac{UV^4}{L^7}< 440.62 \frac{UV^4}{L^7},
\]
provided that $N$ is sufficiently large.
\end{proof}

The reader might wonder why we did not integrate numerically from the start,
however, giving a provable bound for an eight dimensional integral is quite a
delicate task.

\section{Steps 3 and 4}
We now apply the weighted version of Iwaniec's sieve due to Kawada and Wooley
(see \cite[Lemma~9.1]{KaWo}). We do not want to go into details here, as the
computations are the same as in the work of Ren \cite[Lemma~4.17]{Ren4cubes}. We obtain
\[
\sum_{n\in I(\delta,N)} r^2(n) \leq (1+o(1))e^\gamma J \mathfrak{S}_1W(z),
\]
where $\gamma=0.577\ldots$ is Euler's constant,
and 
\begin{eqnarray*}
W(z) & = & \prod_{p\leq N^{11/180-\epsilon}} \left(1-\frac{\omega(p)}{p}\right),\\
\mathfrak{S}_1 & = & (1+T_1(3)+T_1(9))\prod_{p\neq 3} (1+T_1(p)),\\
\omega(p) & = & \frac{1+T_p(p)}{1+T_1(p)}.
\end{eqnarray*}
The singular series $\mathfrak{S}_d$ mentioned in the introduction enters into the sieve,
its contribution can be seen in the structure of the product $W(z)$.
We have
\[
1-\frac{\omega(p)}{p} = \left(1-\frac{1}{p}\right)\left(1-\frac{T_p(p)-T_1(p)}{p-1}\right).
\]
Using Mertens' estimate
$\prod_{p<z}\left(1-\frac{1}{p}\right)\sim e^{-\gamma}\frac{1}{\log z}$ 
and the fact that
$\prod_{p\geq 11} \left(1-\frac{T_p(p)-T_1(p)}{p-1}\right)$
 converges we obtain for
$z=N^{11/180-\epsilon}$ 
\begin{eqnarray*}
\prod_{11\leq p\leq N^{11/180-\epsilon}}\left(1-\frac{\omega(p)}{p}\right) & = &
(1+o(1))\frac{e^{-\gamma}}{\log z}\cdot
2\cdot\frac{3}{2}\frac{5}{4}\frac{7}{6}\prod_{p\geq 11}\left(1-\frac{T_p(p)-T_1(p)}{p-1}\right)\\
 & \leq & \frac{40.197}{\log N}\prod_{p\geq 11}\left(1-\frac{T_p(p)-T_1(p)}{p-1}\right),
\end{eqnarray*}
provided that $\epsilon$ is sufficiently small.

If $p \mid a$, then $S(p,a)=p$.
If $p\nmid a$, then we have by Weil-estimates
$|S(p, a)|\leq 2\sqrt{p}$, and therefore $|C(p,
a)|\leq 2\sqrt{p}+1$. Hence
\[
|T_p(p)|\leq\left(\frac{2\sqrt{p}+1}{p-1}\right)^7,\quad
|T_1(p)|\leq\frac{2}{\sqrt{p}}\left(\frac{2\sqrt{p}+1}{p-1}\right)^7,
\]
and we see that the product over $p$ converges so fast that it can easily
be
evaluated numerically.

More precisely we have
\begin{multline*}
\prod_{p\geq P}\left(1-\frac{T_p(p)-T_1(p)}{p-1}\right) \leq \prod_{p\geq P}
\left(1+2\frac{(\sqrt{p}+2)(2\sqrt{p}+1)^7}{\sqrt{p}(p-1)^7}\right)\\
\leq \prod_{p\geq P}
\left(1+\frac{M}{p^{7/2}}\right)\leq
\left(\frac{\zeta(7/2)}{\zeta(7)}\prod_{p<P}\left(1+\frac{1}{p^{7/2}}\right)^{-1}\right)^M,
\end{multline*}
where
$M=2(1-\frac{1}{P})^{-7}(1+\frac{2}{\sqrt{P}})(2+\frac{1}{\sqrt{P}})^7$.

Using  $P=4000$ we obtain $M=279.551$, and write
\begin{eqnarray*}
\prod_{p\geq 11}\left(1-\frac{T_p(p)-T_1(p)}{p-1}\right) & \leq & \prod_{11\leq p\leq
  200}\left(1-\frac{T_p(p)-T_1(p)}{p-1}\right) \prod_{200\leq p\leq
  4000}\left(1+2\frac{(\sqrt{p}+2)(2\sqrt{p}+1)^7}{\sqrt{p}(p-1)^7}\right)\\
&&
\qquad\left(\frac{\zeta(7/2)}{\zeta(7)}\prod_{p<4000}\left(1+\frac{1}{p^{7/2}}\right)^{-1}\right)^M\\
& \leq & 1.029437(1+1.92\cdot 10^{-5})(1+4.54\cdot 10^{-11})\leq 1.02944.
\end{eqnarray*}
Plugging this value into the estimate above we obtain
\[
\prod_{11\leq p\leq N^{11/180-\epsilon}}\left(1-\frac{\omega(p)}{p}\right)
\leq
\frac{41.38}{\log N}.
\]

Similarly we can estimate $\mathfrak{S}_1$ as
\begin{eqnarray*}
\mathfrak{S}_1 & \leq & (1+T_1(3)+T_1(9))\prod_{5\leq p\leq
500}(1+T_1(p))\prod_{500\leq p\leq
4000}\left(1+\frac{2(2\sqrt{p}+1)^7}{\sqrt{p}(p-1)^7}\right)\\
&&\qquad
\left(\frac{\zeta(4)}{\zeta(8)}\prod_{p<4000}\left(1+\frac{1}{p^4}\right)^{-1}\right)^{270.982}\\
& \leq & 3.0963(1+1.19\cdot 10^{-7})(1+1.64\cdot 10^{-10})\leq 3.0964.
\end{eqnarray*}

\section{Putting these results together}
Putting these results together we obtain
\[
\sum_{n\in I(\delta,N)} r^2(n) \leq (1+o(1)) e^\gamma 440.62 \cdot 41.3794
 \cdot 3.0964 \cdot UV^4L^{-8} \leq 100\, 552 UV^4 L^{-8}
\]
and the proof of Proposition~\ref{prop:r^2 sum} is complete.

To deduce Theorem~\ref{thm:4cubes} we apply the Cauchy-Schwarz inequality in
much the same way as in the proof of Romanov's theorem. We have
\[
\sum_{n\in I(\delta,N)} r(n) = (\pi(2U)-\pi(U))^2(\pi(2V)-\pi(V))^2 \sim
\frac{3^4\cdot6^2}{5^2}U^2V^2 L^{-4},
\]
thus
\begin{multline*}
\underset{r(n)\geq 1}{\sum_{n\in I(\delta,N)} }1 \geq \Big(\sum_{n\in I(\delta,N)}
r(n)\Big)^2\Big(\sum_{n\in I(\delta,N)}  r^2(n)\Big)^{-1}\\ 
\geq \frac{3^8\cdot6^4}{5^4\cdot  100\, 552} U^3 =(1+O(\delta)) \frac{3^{12}}{5^4\cdot  100\, 552} N\geq 0.00845638 N,
\end{multline*}

for sufficiently small $\delta$.
Patching intervals of the form $[N/8, N]$ together Theorem~\ref{thm:4cubes} follows by multiplying the last density with $8/7$.

The authors would like to thank the referee for useful comments on the manuscript.

\ \\
\textbf{Key words and MSC:}\\
Goldbach-Waring problem, applications of sieve methods, sums of cubes\\
Primary: 11P32 Goldbach-type theorems; other additive questions involving primes\\
Secondary: 11P05 Waring's problem and variants\\
11N36 Applications of sieve methods 
\end{document}